\documentclass[11pt,a4paper]{amsart}
\usepackage{amsthm,amsmath,amssymb,latexsym}
\usepackage{array,booktabs,ragged2e}
\usepackage{graphicx}
\usepackage[margin=0.7cm]{caption}
\usepackage{color}

\newtheorem{theorem}{Theorem}[section]
\newtheorem{corollary}[theorem]{Corollary}

\newtheorem{lemma}[theorem]{Lemma}

\newtheorem*{main}{Main Theorem}

\theoremstyle{definition}
\newtheorem{definition}[theorem]{Definition}

\DeclareMathOperator{\hDim}{hDim}
\DeclareMathOperator{\Std}{Std}
\DeclareMathOperator{\Spec}{Spec}

\newcommand{\F}{\mathbb{F}}

\newcommand{\N}{\mathbb{N}}


\newcounter{thmlistcnt}
	{\setcounter{thmlistcnt}{0}%
	\begin{list}{\emph{(\roman{thmlistcnt})}}{%
		\usecounter{thmlistcnt}%
		\setlength{\topsep}{0pt}%
		\setlength{\leftmargin}{0pt}%
		\setlength{\itemsep}{0pt}%
		\setlength{\itemindent}{17pt}}%
	}%
	{\end{list}}%

\title{Hausdorff dimension in $R$-analytic profinite groups}

\author[Fern\'{a}ndez-Alcober]{Gustavo A. Fern\'{a}ndez-Alcober}
\address{Department of Mathematics\\ University of the Basque Country UPV/EHU\\
48080 Bilbao, Spain}
\email{gustavo.fernandez@ehu.eus}

\author[Giannelli]{Eugenio Giannelli}
\address[]{Department of Mathematics, University of Kaiserslautern,
P.O. Box 3049, 67655 Kaiserslautern, Germany}
\email{gianelli@mathematik.uni-kl.de}

\author[Gonz\'{a}lez-S\'{a}nchez]{Jon Gonz\'{a}lez-S\'{a}nchez}
\address{Department of Mathematics\\ University of the Basque Country UPV/EHU\\
48080 Bilbao, Spain}
\email{jon.gonzalez@ehu.es}

\thanks{The first and third authors are supported by the Spanish Government, grants
MTM2011-28229-C02 and MTM2014-53810-C2-2-P, and by the Basque Government, grants IT753-13 and IT974-16.
The second author gratefully acknowledges financial support by the GRECA research group and by the ERC Advanced Grant 291512.}

\keywords{Profinite groups, analytic groups, Hausdorff dimension\vspace{3pt}}

\subjclass[2010]{Primary 20E18; Secondary 28A78.}

\begin{document}

\begin{abstract}
We study the Hausdorff dimension of $R$-analytic subgroups in an $R$-analytic profinite group, where $R$ is a pro-$p$ ring whose associated graded ring is an integral domain.
In particular, we prove that the set of such Hausdorff dimensions is a finite subset of the rational numbers.
\end{abstract}

\maketitle

\section{Introduction}

The study of Hausdorff dimension in profinite groups was initiated by Abercrombie in \cite{Abercrombie}, and has attracted special attention in recent times; see, for example, \cite{Abert,BK,YA,Ershov,FZ,Siegenthaler}.
If $G$ is a countably based infinite profinite group, a \emph{(normal) filtration\/} of $G$ is a descending series
$\{G_n\}_{n\in\mathbb{N}}$ of (normal) open subgroups of $G$ which is a base of neighbourhoods of $1$.
We can define a metric on $G$ by
\[
d(x,y)
=
\inf \, \{|G:G_n|^{-1}\ |\ xy^{-1}\in G_n\},
\]
and the topology induced by $d$ coincides with the original topology in $G$.
The metric $d$ defines a Hausdorff dimension function on all subsets of $G$, which we denote by  $\hDim_{\{G_n\}}$.
As shown in \cite[Theorem 2.4]{YA}, if the filtration $\{G_n\}_{n\in\mathbb{N}}$ is normal and $H$ is a closed subgroup of $G$, then
\[
\hDim_{\{G_n\}}(H)
=
\liminf_{n\rightarrow\infty} \, \frac{\log |H:H\cap G_n|}{\log |G:G_n|}.
\]

In \cite{YA}, Barnea and Shalev studied the Hausdorff dimension in $p$-adic analytic pro-$p$ groups with respect to the filtration $\{G^{p^n}\}_{n\in\N}$,
and proved that
\[
\hDim_{\{G^{p^n}\}}(H)=\frac{\dim(H)}{\dim(G)}
\]
for every closed subgroup $H$ of $G$.
Here, if $G$ is an analytic group, $\dim(G)$ denotes the dimension of $G$ as an analytic manifold. 

In this paper we study Hausdorff dimension in $R$-analytic profinite groups, where $R$ is a pro-$p$ ring whose associated graded ring is an integral domain.
This is the natural setting for working with analytic groups over pro-$p$ rings, see \cite[Chapter 13]{Dixon}.
If $G$ is an $R$-analytic profinite group, then $G$ is virtually a pro-$p$ group, but the subgroups $G^{p^n}$ need not to form a filtration, since they need not to be open in $G$.
We consider instead the natural filtration induced by a standard open subgroup of $G$.
As we will see, the Hausdorff dimension of a closed subgroup $H$ of $G$ is the same with respect to the natural filtrations induced by all open standard subgroups of $G$.
This allows us to define the concept of \textit{standard Hausdorff dimension} of $H$, denoted by $\hDim_{\mathrm{Std}}(H)$.
We get the following theorem, in the spirit of the result of Barnea and Shalev. 

\begin{main}
Let $G$ be an $R$-analytic profinite group and let $H$ be an $R$-analytic subgroup of
$G$.
Then 
\begin{equation}
\label{T1}
\hDim_{\Std}(H)=\frac{\dim(H)}{\dim(G)}.
\end{equation}
\end{main}

An $R$-analytic subgroup $H$ of $G$ is a subgroup that is also an $R$-analytic submanifold of $G$; then $H$ is closed in $G$.
A crucial remark is that while the converse is true in a $p$-adic analytic group, i.e.\
every closed subgroup is analytic, it need not hold for an arbitrary pro-$p$ ring $R$.
For example, $\F_p[[t^d]]$ is a closed subgroup of $\F_p[[t]]$ for every positive integer $d$, but it is not analytic for $d>1$, even if it is an
$\F_p[[t]]$-analytic group in its own right.

Observe that (\ref{T1}) implies that the \textit{$R$-analytic spectrum\/}  of $G$, defined by
\[
\Spec_R(G)
=
\{ \hDim_{\Std}(H) \mid \text{$H$ is an $R$-analytic subgroup of $G$} \},
\]
is finite and consists of rational values.
On the other hand, the spectrum of $\F_p[[t]]$ corresponding to all closed subgroups is the full interval $[0,1]$, as shown in \cite[Lemma 4.1]{YA}.
Thus our main theorem is pointing to the fact that most closed subgroups of $\F_p[[t]]$ are non-analytic.

\vspace{10pt}

\noindent
\textit{Notation.\/}
We write $X^{(d)}$ to denote the cartesian product of $d$ copies of a set $X$.
The symbol $\subseteq_o$ indicates that a subset of a topological space is open.

\section{Preliminaries}

Throughout this paper $R$ is a pro-$p$ ring whose associated graded ring is an integral domain.
Hence $R$ is an integral domain, and we write $K$ for its field of fractions.
If $\mathfrak{m}$ is the maximal ideal of $R$, then $R/\mathfrak{m}$ is a finite field of characteristic $p$.
Set $H(n)=\dim_{R/\mathfrak{m}} (\mathfrak{m}^n/\mathfrak{m}^{n+1})$ for all $n\in\N$.
For large enough $n$, $H(n)$ coincides with a polynomial $P(n)$, called the \emph{Hilbert polynomial\/} of $R$ \cite[Chapter 8, Theorem C]{Eisenbud}.

Let $G$ be an $R$-analytic group.
Without loss of generality, we assume that the manifold structure is given by a full atlas.
A subgroup $H$ of $G$ is an \emph{analytic subgroup\/} if it is also an analytic submanifold of $G$.
We adopt Serre's definition of submanifold \cite[Section 3.11]{Serre}.
Every analytic subgroup is closed in $G$.
The converse is not true in general, but every open subgroup $H$ of $G$ (actually, any open subset) is a submanifold when considered with the restrictions of the charts of $G$, and $\dim H=\dim G$.

An $R$-analytic group $S$ with a global chart given by a homeomorphism $\phi:S\rightarrow (\mathfrak{m}^N)^{(d)}$ is called an \emph{$R$-standard group\/} of level $N$ if $\phi(1)=0$ and, for all $j\in\{1,\ldots,d\}$, there exist $F_j(X,Y)\in R[[X,Y]]$ (where $X$ and $Y$ are $d$-tuples of indeterminates), without constant term, such that
\begin{equation}
\label{formal group law}
\phi(xy)=F(\phi(x),\phi(y)), \quad \text{for every $x,y\in S$.}
\end{equation}
Here, $F=(F_1,\ldots,F_d)$ is called the \textit{formal group law\/} associated to $S$.
In these circumstances, we also say that $(S,\phi)$ is a standard group.

By using (\ref{formal group law}), $F$ defines a new group structure on $(\mathfrak{m}^N)^{(d)}$, other than its natural additive structure, and then
$\phi$ is an isomorphism between $S$ and $((\mathfrak{m}^N)^{(d)},F)$.
Every $R$-analytic group contains an open (and so analytic) $R$-standard subgroup $S$ \cite[Theorem 13.20]{Dixon}.
As a consequence, profinite $R$-analytic groups are countably based.

The formal group law of a standard group satisfies the following result \cite[pp.\ 331-334]{Dixon}. 

\begin{lemma}
\label{fgl}
Let $F$ be a formal group law of dimension $d$ associated to a standard group $S$.
Then
\[
F(X,Y)=X+Y+G(X,Y),
\]
where every monomial involved in $G$ has total degree at least $2$, and contains a non-zero power of $X_r$ and of $Y_s$ for some $r,s\in\{1,\ldots,d\}$.
Moreover,
\[
F(X,I(X))=0=F(I(X),X)
\]
for some $I(X)=-X+H(X)\in R[[X]]$, and every monomial involved in $H$ has total degree at least $2$.
\end{lemma}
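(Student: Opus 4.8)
The plan is to translate the group axioms of $S$ into formal identities for $F$ and then run the standard degree-by-degree construction of a formal inverse. The first task is to pass from functional identities on $(\mathfrak{m}^N)^{(d)}$ to genuine identities of power series. Since $\phi(1)=0$, the relations $1x=x=x1$ together with associativity of multiplication in $S$ yield, via (\ref{formal group law}), the functional equalities $F(0,Y)=Y$, $F(X,0)=X$ and $F(F(X,Y),Z)=F(X,F(Y,Z))$ valid for all arguments in $(\mathfrak{m}^N)^{(d)}$. Because $R$ is an infinite integral domain with $\bigcap_n\mathfrak{m}^n=0$, a convergent power series over $R$ that vanishes as a function on all of $(\mathfrak{m}^N)^{(m)}$ is already the zero series: by induction on the number of variables one reduces to the single-variable case, where factoring out the lowest-degree monomial, cancelling its nonzero value using that $R$ has no zero-divisors, and letting the argument tend $\mathfrak{m}$-adically to $0$ forces the leading coefficient to vanish. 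Applying this to the differences of the two sides upgrades the above functional identities to formal identities in $R[[X,Y]]$ and $R[[X,Y,Z]]$.

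Next I would read off the shape of $G$. Writing each $F_j(X,Y)$ as a sum of monomials with no constant term, the identity $F(X,0)=X$ says that the sum of the pure-$X$ monomials of $F_j$ equals $X_j$, while $F(0,Y)=Y$ forces the pure-$Y$ part to be $Y_j$. Hence $F_j=X_j+Y_j+G_j$, where every monomial of $G_j$ involves at least one $X_r$ and at least one $Y_s$, and therefore has total degree at least $2$. This is precisely the first assertion of the lemma.

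For the inverse I would construct $I(X)=\sum_{k\ge 1}I^{(k)}(X)\in R[[X]]^{(d)}$, with $I^{(k)}$ the homogeneous part of degree $k$, so as to satisfy $F(X,I(X))=0$. Since $F(X,Y)=X+Y+G(X,Y)$ with $G$ of order at least $2$, the degree-$1$ part of $F(X,I(X))$ is $X+I^{(1)}(X)$, forcing $I^{(1)}(X)=-X$; thus $I(X)=-X+H(X)$ with $H$ of order at least $2$. Inductively, the degree-$k$ part of $F(X,I(X))$ equals $I^{(k)}(X)$ plus an expression depending only on $I^{(1)},\dots,I^{(k-1)}$, and the unknown $I^{(k)}$ enters with coefficient $1$, a unit; hence it is uniquely determined and the construction goes through over $R$ with no division. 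To get the left-hand relation $F(I(X),X)=0$ I would build a left inverse $J(X)$ in the same manner and then use associativity and the identity laws to see that the two coincide:
\[
J=F(J,0)=F(J,F(X,I))=F(F(J,X),I)=F(0,I)=I.
\]
Since $F(J,X)=0$ by construction, this gives $F(I,X)=0$, so $I$ is a two-sided inverse of the required form.

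The main obstacle is the opening step, the passage from functional to formal identities; everything afterwards is a purely formal computation. It is exactly here that the hypothesis on $R$ is indispensable: over a finite ring, or one possessing zero-divisors, a power series may vanish identically as a function while being nonzero as a series, and the whole argument would collapse.
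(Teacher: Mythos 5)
Your argument is correct and complete in outline: the identity principle for power series over a pro-$p$ integral domain (a series vanishing as a function on $(\mathfrak{m}^N)^{(m)}$ is the zero series) upgrades the group axioms of $S$ to formal identities for $F$, the identity laws then force $F=X+Y+G$ with every monomial of $G$ mixed in the $X$'s and $Y$'s, and the inverse is built degree by degree with the two-sided property recovered from associativity. Note that the paper offers no proof of its own here---the lemma is quoted from \cite[pp.~331--334]{Dixon}---and your argument is essentially the standard one underlying that reference, so there is nothing to contrast. One small point worth making explicit: your appeal to $R$ being infinite rests on $\mathfrak{m}\neq 0$; this is harmless, because if $\mathfrak{m}=0$ then $R$ is a finite field, $(\mathfrak{m}^N)^{(d)}=\{0\}$, and the whole standard-group setting (and indeed the identity principle you invoke) degenerates, so that case is implicitly excluded by the framework of the paper.
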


It follows that $(\mathfrak{m}^{N+n})^{(d)}$ is a subgroup of $((\mathfrak{m}^N)^{(d)},F)$ for all $n\ge 0$.
This allows us to introduce a special type of filtrations in an $R$-analytic group.

\begin{definition}
Let $G$ be an $R$-analytic group and let $S$ be a standard open subgroup of $G$, with global chart $(S,\phi)$.
Then for every positive integer $n\ge 0$, and for every subset $A\subseteq S$, we define
\[
\mathfrak{m}^n_\phi A = \phi^{-1}(\mathfrak{m}^n\phi(A)).
\]
We say that $\{\mathfrak{m}^n_\phi S\}_{n\in\mathbb{N}}$ is the \emph{natural filtration\/} of $G$ induced by $S$.
\end{definition}

Observe that 
\[
\mathfrak{m}^n_\phi S=\phi^{-1}((\mathfrak{m}^{N+n})^{(d)}),
\]
which implies that $\mathfrak{m}^n_\phi S$ is an open subgroup of $G$, and also that
$\{\mathfrak{m}^n_\phi S\}_{n\in\mathbb{N}}$ is a filtration of $G$.
Actually, we have $\mathfrak{m}^n_\phi S\trianglelefteq S$
(see \cite[Proposition 13.22]{Dixon}).

\begin{lemma}
\label{iso}
Let $(S,\phi)$ be an $R$-standard group.
Then:
\begin{enumerate}
\item
For all $x,y\in S$ and $n\ge N$, we have $\phi(xy^{-1}) \in (\mathfrak{m}^n)^{(d)}$ if and only if
$\phi(x)-\phi(y) \in (\mathfrak{m}^n)^{(d)}$.
\item
For every $n\ge 0$, we have $|S:\mathfrak{m}^n_\phi S|=|(\mathfrak{m}^N)^{(d)}:(\mathfrak{m}^{N+n})^{(d)}|=q^{d\hspace{.4pt}f(n)}$,
where $f(n)=\sum_{i=N}^{N+n-1} \, H(i)$.
\item
$\phi$ is an isometry between the group $S$ with the metric induced by the filtration $\{\mathfrak{m}^n_\phi S\}_{n\in\mathbb{N}}$ and the group
$((\mathfrak{m}^N)^{(d)},+)$ with the metric induced by the filtration $\{(\mathfrak{m}^{N+n})^{(d)}\}_{n\in\mathbb{N}}$.
\end{enumerate}
\end{lemma}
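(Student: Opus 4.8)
The crux of the lemma is part (i); once it is established, parts (ii) and (iii) follow formally. Throughout I would write $u=\phi(x)$, $v=\phi(y)$ and $w=\phi(xy^{-1})$, all of which lie in $(\mathfrak{m}^N)^{(d)}$. Since $\phi(1)=0$, Lemma~\ref{fgl} gives $\phi(x^{-1})=I(\phi(x))$, and applying $\phi$ to the identity $x=(xy^{-1})y$ yields, via the formal group law and Lemma~\ref{fgl},
\[
u=F(w,v)=w+v+G(w,v),\qquad\text{so that}\qquad u-v=w+G(w,v).
\]
Everything in part (i) is to be read off from this single relation, together with the structural fact from Lemma~\ref{fgl} that every monomial of $G$ has total degree at least $2$ and involves both some $X_r$ and some $Y_s$.

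For the forward implication of part (i), I would assume $w\in(\mathfrak{m}^n)^{(d)}$ with $n\ge N$: each monomial of $G(w,v)$ then carries at least one coordinate of $w$ (in $\mathfrak{m}^n$) and at least one coordinate of $v$ (in $\mathfrak{m}^N$), so $G(w,v)\in(\mathfrak{m}^{n+N})^{(d)}\subseteq(\mathfrak{m}^n)^{(d)}$, whence $u-v=w+G(w,v)\in(\mathfrak{m}^n)^{(d)}$. The reverse implication is the delicate point and the main obstacle: the naive degree estimate only gives $G(w,v)\in(\mathfrak{m}^{2N})^{(d)}$, which is useless once $n>2N$, so I would argue by bootstrapping on the level of $w$. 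Assuming $u-v\in(\mathfrak{m}^n)^{(d)}$, I claim $w\in(\mathfrak{m}^{\min(n,kN)})^{(d)}$ for every $k\ge 1$, by induction on $k$: the base case $k=1$ is just $w\in(\mathfrak{m}^N)^{(d)}$, and if $w\in(\mathfrak{m}^m)^{(d)}$ with $m=\min(n,kN)$, then the mixed-monomial structure of $G$ forces $G(w,v)\in(\mathfrak{m}^{m+N})^{(d)}$, so $w=(u-v)-G(w,v)\in(\mathfrak{m}^{\min(n,(k+1)N)})^{(d)}$. Taking $k$ with $kN\ge n$ gives $w\in(\mathfrak{m}^n)^{(d)}$. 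The entire weight of the argument rests on the clause of Lemma~\ref{fgl} that each monomial of $G$ contains a factor from $Y$: this is precisely what lets every iteration raise the level of $w$ by $N$.

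For part (ii), I would apply part (i) with $n$ replaced by $N+n$ (note $N+n\ge N$): recalling that $\mathfrak{m}^n_\phi S=\phi^{-1}((\mathfrak{m}^{N+n})^{(d)})$, two elements $x,y\in S$ satisfy $xy^{-1}\in\mathfrak{m}^n_\phi S$ if and only if $\phi(x)-\phi(y)\in(\mathfrak{m}^{N+n})^{(d)}$. Hence $x\mapsto\phi(x)+(\mathfrak{m}^{N+n})^{(d)}$ descends to a bijection from the coset space $S/\mathfrak{m}^n_\phi S$ onto $(\mathfrak{m}^N)^{(d)}/(\mathfrak{m}^{N+n})^{(d)}$, so $|S:\mathfrak{m}^n_\phi S|=|(\mathfrak{m}^N)^{(d)}:(\mathfrak{m}^{N+n})^{(d)}|$. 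This index equals $|\mathfrak{m}^N:\mathfrak{m}^{N+n}|^d$, and since each $\mathfrak{m}^i/\mathfrak{m}^{i+1}$ is a vector space of dimension $H(i)$ over a field $R/\mathfrak{m}$ of order $q$, the telescoping product over $i=N,\dots,N+n-1$ yields $|\mathfrak{m}^N:\mathfrak{m}^{N+n}|=q^{f(n)}$ and therefore $q^{d\hspace{.4pt}f(n)}$. Finally, for part (iii), both metrics are infima over the levels $n$ at which the relevant difference lands in the $n$-th filtration term; by part (i) the condition $xy^{-1}\in\mathfrak{m}^n_\phi S$ matches $\phi(x)-\phi(y)\in(\mathfrak{m}^{N+n})^{(d)}$, and by part (ii) the corresponding index values coincide, so the two infima are equal and $\phi$ is an isometry. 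Apart from the bootstrapping in part (i), the remainder is routine bookkeeping.
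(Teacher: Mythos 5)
Your proof is correct, and its skeleton is the same as the paper's: everything is read off from the identity $\phi(x)-\phi(y)=\phi(xy^{-1})+G(\phi(xy^{-1}),\phi(y))$ together with the fact from Lemma~\ref{fgl} that every monomial of $G$ involves both an $X$- and a $Y$-variable, and your derivations of (ii) and (iii) from (i) match the paper's almost verbatim. The one point of genuine divergence is the reverse implication in (i). You prove it by bootstrapping: knowing only $w=\phi(xy^{-1})\in(\mathfrak{m}^N)^{(d)}$ a priori, you iterate the estimate $G(w,v)\in(\mathfrak{m}^{m+N})^{(d)}$ to raise the known level of $w$ by $N$ at each pass until it reaches $n$. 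The paper avoids any induction with a valuation trick: assuming $x\neq y$, it lets $k$ be the \emph{exact} level of $w$, i.e.\ $w\in(\mathfrak{m}^k)^{(d)}\smallsetminus(\mathfrak{m}^{k+1})^{(d)}$; then $G(w,v)\in(\mathfrak{m}^{k+N})^{(d)}\subseteq(\mathfrak{m}^{k+1})^{(d)}$, so $\phi(x)-\phi(y)\equiv w \pmod{(\mathfrak{m}^{k+1})^{(d)}}$, and hence $\phi(x)-\phi(y)$ lies in $(\mathfrak{m}^k)^{(d)}\smallsetminus(\mathfrak{m}^{k+1})^{(d)}$ as well. Since $w$ and $\phi(x)-\phi(y)$ then have the same exact level, both memberships in (i) reduce to the single condition $k\ge n$, and the two implications drop out simultaneously. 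The ingredients are identical; the paper's version is shorter because fixing the exact valuation of $w$ puts the unknown on the side where the naive estimate $G(w,v)\in(\mathfrak{m}^{k+N})^{(d)}$ is already sharp, whereas your iteration compensates for starting from the weaker bound $w\in(\mathfrak{m}^N)^{(d)}$. Your bootstrap is sound (each pass gains a factor of $\mathfrak{m}^N$, and the level satisfies $N\ge 1$, an assumption the paper's proof also uses implicitly), so the difference is one of economy, not of correctness.
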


\begin{proof}
(i)
Since $\phi(1)=0$, we may assume that $x\ne y$.
Let $k\in\N$ be such that
$\phi(xy^{-1})\in  (\mathfrak{m}^k)^{(d)} \smallsetminus (\mathfrak{m}^{k+1})^{(d)}$.
Then since
\[
\phi(x)
=
\phi(xy^{-1}y)
=
F(\phi(xy^{-1}),\phi(y))
=
\phi(xy^{-1}) + \phi(y) + G(\phi(xy^{-1}),\phi(y)),
\]
by Lemma \ref{fgl}, we have
\[
\phi(x) - \phi(y)
\equiv
\phi(xy^{-1})
\pmod{(\mathfrak{m}^{k+1})^{(d)}}.
\]
Hence also
$\phi(x)-\phi(y)\in (\mathfrak{m}^k)^{(d)} \smallsetminus (\mathfrak{m}^{k+1})^{(d)}$,
and (i) follows.

(ii)
Observe that (i) implies that
\begin{equation}
\label{product vs sum 2}
xy^{-1} \in \mathfrak{m}^n_\phi S
\
\Longleftrightarrow
\
\phi(x)-\phi(y) \in (\mathfrak{m}^{N+n})^{(d)},
\end{equation}
or what is the same,
\[
x \cdot \mathfrak{m}^n_\phi S = y \cdot \mathfrak{m}^n_\phi S 
\
\Longleftrightarrow
\
\phi(x)+(\mathfrak{m}^{N+n})^{(d)} = \phi(y)+(\mathfrak{m}^{N+n})^{(d)}.
\]
This proves (ii), since $\phi$ is a bijection.

(iii)
According to the definition of the metric associated to a fitration, it is clear that (ii) and (\ref{product vs sum 2}) together imply that $\phi$ is an isometry.
\end{proof}

Hausdorff dimension can be defined for any subset of a metric space, see \cite[Chapter 2]{Falconer} for its definition and main properties.
We need the following two lemmas about Hausdorff dimension in countably based profinite groups.

\begin{lemma}
\label{leqo}
Let $G$ be a countably based profinite group with filtration $\{G_n\}_{n\in\mathbb{N}}$.
Let $H$ be a closed subgroup of $G$, and let $U$ be a non-empty open subset of $H$.
Then
\[
\hDim_{\{G_n\}}(H) = \hDim_{\{G_n\}}(U).
\]
\end{lemma}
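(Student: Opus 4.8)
The plan is to prove the two inequalities separately. Since $U\subseteq H$, monotonicity of Hausdorff dimension gives $\hDim_{\{G_n\}}(U)\le\hDim_{\{G_n\}}(H)$ at once, so the entire content of the lemma lies in the reverse inequality $\hDim_{\{G_n\}}(H)\le\hDim_{\{G_n\}}(U)$.

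The key observation I would exploit is that \emph{right} translation is an isometry for the metric $d$, even when the filtration is not assumed normal. Indeed, for any $g\in G$ and any $x,y\in G$ one has $(xg)(yg)^{-1}=xgg^{-1}y^{-1}=xy^{-1}$, so $(xg)(yg)^{-1}\in G_n\Longleftrightarrow xy^{-1}\in G_n$, whence $d(xg,yg)=d(x,y)$. By contrast, left translation would involve the conjugate $g(xy^{-1})g^{-1}$, which is why normality would be needed there; organising the argument around right cosets sidesteps this issue entirely. Fixing $g\in H$, right multiplication by $g$ is therefore an isometry of $H$ onto itself, and since Hausdorff dimension is invariant under isometries, any two right cosets of a fixed subgroup of $H$ have the same Hausdorff dimension.

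Next I would produce a coset sitting inside $U$. As $U$ is a non-empty open subset of the profinite group $H$, choose $h\in U$; then $Uh^{-1}$ is an open neighbourhood of $1$ in $H$, so it contains an open subgroup $W$ of $H$, giving $Wh\subseteq U$. Because $W$ is open in the profinite group $H$, it has finite index, so $H$ decomposes as a finite disjoint union of right cosets $H=\bigsqcup_{i=1}^{k}Wg_i$. By the isometry remark above, all of these cosets, together with $Wh$, share a single common value $\delta$ of the Hausdorff dimension. Finite stability of Hausdorff dimension then yields $\hDim_{\{G_n\}}(H)=\max_i\hDim_{\{G_n\}}(Wg_i)=\delta$, while monotonicity gives $\delta=\hDim_{\{G_n\}}(Wh)\le\hDim_{\{G_n\}}(U)$. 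Combining these produces the missing inequality $\hDim_{\{G_n\}}(H)\le\hDim_{\{G_n\}}(U)$, completing the proof.

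The only delicate point is the deliberate use of right cosets and right translations rather than left ones: since the filtration need not be normal, only right translations are guaranteed to be isometries, and the whole argument must be arranged accordingly. All the Hausdorff-dimension facts invoked here—monotonicity, finite stability, and invariance under isometries—are standard, see \cite{Falconer}.
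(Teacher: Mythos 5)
Your proof is correct and is essentially the argument the paper intends: the paper's own proof is a one-line citation to the standard properties of Hausdorff dimension in \cite{Falconer} (monotonicity, finite stability, and invariance under bi-Lipschitz maps, hence isometries), and your right-coset decomposition of $H$ into isometric copies of an open subgroup $W$ with $Wh\subseteq U$ is precisely the straightforward verification being outsourced there. Your care in using \emph{right} translations so that the argument works without assuming the filtration is normal is a correct and relevant detail, since the lemma indeed makes no normality assumption.
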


\begin{proof}
The proof is a straightforward consequence of \cite[Section 2.2]{Falconer}.
\end{proof}

Let $G$ be a countably based profinite group and let $S$ be an open subgroup of $G$.
If $\{S_n\}_{n\in\N}$ is a filtration of $S$, we can calculate the Hausdorff dimension of $X\subseteq S$ with respect to the metric induced by
$\{S_n\}_{n\in\N}$ in $S$ or in $G$, which we denote by $\hDim_{\{S_n\}}^S (X)$ and $\hDim_{\{S_n\}}^G (X)$, respectively.
Our next lemma shows that there is no need to make this distinction in the notation.

\begin{lemma}
\label{ino}
Let $G$ be a countably based profinite group and let $S$ be an open subgroup of $G$.
If $\{S_n\}_{n\in\mathbb{N}}$ is a filtration of $S$, then
\[
\hDim_{\{S_n\}}^G (X) = \hDim_{\{S_n\}}^S (X)
\]
for every $X\subseteq S$.
\end{lemma}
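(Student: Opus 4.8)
The plan is to reduce the statement to the elementary fact that Hausdorff dimension is unchanged when the underlying metric is rescaled by a positive constant. First I would observe that, since $S$ is open in $G$, every $S_n$ is open in $G$ as well, and $\{S_n\}_{n\in\N}$ is still a base of neighbourhoods of $1$ in $G$; hence $\{S_n\}_{n\in\N}$ is a genuine filtration of $G$, and the notation $\hDim_{\{S_n\}}^G$ is meaningful. Note also that all the indices involved are finite, as $G$ is profinite and the subgroups $S_n\le S$ are open.

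The crucial point is to compare the two metrics on $S$. Write $d_S$ and $d_G$ for the metrics induced on $S$ by the filtration $\{S_n\}$ inside $S$ and inside $G$, respectively, following the infimum formula from the introduction. I would use the index factorisation $|G:S_n|=|G:S|\cdot|S:S_n|$, valid because $S_n\le S\le G$. For $x,y\in S$ the element $xy^{-1}$ lies in $S$, so the infima defining $d_S(x,y)$ and $d_G(x,y)$ range over exactly the same set of indices $n$ (those with $xy^{-1}\in S_n$); the factorisation then lets the factor $|G:S|^{-1}$ come out of the infimum, giving
\[
d_G(x,y)=|G:S|^{-1}\, d_S(x,y)\qquad\text{for all } x,y\in S.
\]
Thus on $S$ the two metrics differ only by the fixed positive scalar $|G:S|^{-1}$.

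Finally, for any $X\subseteq S$ the identity map $(X,d_S)\to(X,d_G)$ is a similarity, hence bi-Lipschitz; since rescaling the metric by $\lambda>0$ multiplies the $s$-dimensional Hausdorff measure by $\lambda^s$ and therefore leaves the critical exponent unchanged, the Hausdorff dimension is a bi-Lipschitz invariant. This is recorded in \cite[Chapter 2]{Falconer}. Consequently $\hDim_{\{S_n\}}^G(X)=\hDim_{\{S_n\}}^S(X)$, as desired.

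I do not expect a genuine obstacle: the only steps requiring care are confirming that $\{S_n\}$ is indeed a filtration of $G$ and that the two infima run over identical index sets, so that the constant $|G:S|^{-1}$ factors out cleanly. Once this is in place, the conclusion follows immediately from the scaling behaviour of Hausdorff measure.
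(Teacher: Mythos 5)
Your proof is correct and follows essentially the same route as the paper: both establish that the two metrics on $S$ differ by the constant factor $|G:S|$, so the identity map is bi-Lipschitz, and then invoke the bi-Lipschitz invariance of Hausdorff dimension from Falconer. Your additional details (that $\{S_n\}$ is a genuine filtration of $G$ and that the two infima range over the same index set) simply make explicit what the paper leaves implicit.
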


\begin{proof}
Let $d_S$ and $d_G$ be the metrics induced by $\{S_n\}_{n\in\N}$ in $S$ and $G$.
Then $d_S(x,y)=|G:S| \, d_G(x,y)$ for all $x,y\in S$, and the identity map from $(S,d_S)$ to $(S,d_G)$ is bi-Lipschitz.
Now the result follows from \cite[Corollary 2.4]{Falconer}.
\end{proof}

\section{Proof of the main theorem}

In this section we first prove that the Hausdorff dimension of a closed subgroup in an analytic profinite group with respect to a natural filtration is independent of the standard subgroup.
Then we prove the main theorem of our paper about Hausdorff dimension of analytic subgroups.






\begin{theorem}
\label{std}
Let $G$ be an $R$-analytic profinite group, and let $(S,\phi)$ and $(T,\psi)$ be two open standard subgroups of $G$.
Then
\[
\hDim_{\{\mathfrak{m}^n_\phi S\}}(H) = \hDim_{\{\mathfrak{m}^n_\psi T\}}(H)
\]
for every closed subgroup $H$ of $G$.
\end{theorem}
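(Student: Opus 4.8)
The plan is to compare the two metrics $d_\phi$ and $d_\psi$ on $G$ attached to the natural filtrations $\{\mathfrak{m}^n_\phi S\}$ and $\{\mathfrak{m}^n_\psi T\}$, and to show that on a common open neighbourhood of $1$ they agree up to a constant factor; bi-Lipschitz (indeed similar) metrics have the same Hausdorff dimension by \cite[Corollary 2.4]{Falconer}. Since $S\cap T$ is open in $G$ and is itself $R$-analytic, \cite[Theorem 13.20]{Dixon} provides a standard open subgroup $W\subseteq S\cap T$. Shrinking $W$ if necessary, I may assume that the transition map $\theta=\psi\circ\phi^{-1}$ is given on the neighbourhood $\phi(W)$ of $0$ by a single convergent power series over $R$; note that $\theta(0)=\psi(\phi^{-1}(0))=\psi(1)=0$, so $\theta$ has no constant term, and that $\dim S=\dim T=\dim G=d$, so $\theta$ is a map in $d$ variables.

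First I would write the two metrics explicitly on $W$. Let $N$ and $M$ be the levels of $S$ and $T$, and for a tuple $z$ let $v_{\mathfrak{m}}(z)$ be the minimum of the $\mathfrak{m}$-adic valuations of its entries. For $x,y\in W\subseteq S$, combining (\ref{product vs sum 2}) with Lemma \ref{iso}(ii) yields
\[
d_\phi(x,y)=|G:S|^{-1}\,q^{-d\sum_{i=N}^{w-1}H(i)},\qquad w:=v_{\mathfrak{m}}(\phi(x)-\phi(y)),
\]
and likewise $d_\psi(x,y)=|G:T|^{-1}\,q^{-d\sum_{i=M}^{w'-1}H(i)}$ with $w':=v_{\mathfrak{m}}(\psi(x)-\psi(y))$.

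The crux of the argument is to show that $\theta$ preserves the valuation, i.e.\ $w'=w$; this is the step I expect to be the main obstacle. Writing $\theta(X)=AX+\widetilde{G}(X)$, where $A$ is the linear part and every monomial of $\widetilde{G}$ has total degree at least $2$, one has $\psi(x)-\psi(y)=A(\phi(x)-\phi(y))+\bigl(\widetilde{G}(\phi(x))-\widetilde{G}(\phi(y))\bigr)$. A telescoping estimate, using $\phi(x),\phi(y)\in(\mathfrak{m}^N)^{(d)}$, shows that the second summand has valuation at least $w+N$, hence strictly larger than $w$; so everything hinges on the linear part. Here one needs $A\in\GL_d(R)$, which preserves valuation exactly: this follows once both $\theta$ and $\theta^{-1}=\phi\circ\psi^{-1}$ are known to have coefficients in $R$, forcing $A$ and $A^{-1}$ to have entries in $R$. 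Establishing this integrality of the transition map is the delicate point, and it is exactly where the structure of $R$-standard charts (Lemma \ref{fgl} and \cite[Chapter 13]{Dixon}) must be invoked. I note that even a weaker, bounded-distortion version of valuation preservation would already make $d_\phi$ and $d_\psi$ bi-Lipschitz on $W$, which is all that the conclusion requires.

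Granting $w'=w$, the two displayed formulas give
\[
\frac{d_\psi(x,y)}{d_\phi(x,y)}
=\frac{|G:S|}{|G:T|}\,q^{\,d\left(\sum_{i=N}^{w-1}H(i)-\sum_{i=M}^{w-1}H(i)\right)}=:c,
\]
and the exponent is independent of $w$ because the two sums differ only in finitely many terms; thus $c$ is a positive constant, independent of $x,y\in W$. Hence the identity map $(W,d_\phi)\to(W,d_\psi)$ is a similarity and preserves the Hausdorff dimension of every subset of $W$. Finally, since $W$ is open in $G$, the set $U=H\cap W$ is a non-empty open subset of $H$ (it contains $1$); applying Lemma \ref{leqo} to each filtration separately gives
\[
\hDim_{\{\mathfrak{m}^n_\phi S\}}(H)=\hDim_{\{\mathfrak{m}^n_\phi S\}}(U)=\hDim_{\{\mathfrak{m}^n_\psi T\}}(U)=\hDim_{\{\mathfrak{m}^n_\psi T\}}(H),
\]
which is the required equality.
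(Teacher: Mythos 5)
There is a genuine gap, and it sits exactly at the step you flagged --- but the situation is worse than ``delicate'': the claim $w'=w$ is false in general, and so is your fallback. For the first point, a transition map between two standard charts need not be an $R$-integral power series with $R$-integral inverse. Take $R=\Z_p$, $G=\Z_p$, and $S=T=p\Z_p$, with $\phi=\mathrm{id}$ (a standard chart of level $1$) and $\psi(x)=px$ (a standard chart of level $2$, with additive formal group law). Both charts lie in the full atlas, yet $\theta(z)=pz$ has inverse $u\mapsto u/p\notin R[[u]]$, its linear part $A=p$ is not in $\GL_1(\Z_p)$, and $w'=w+1$ for every pair of points. (In this example the two natural filtrations happen to coincide, so the theorem holds, but the mechanism you propose for proving it breaks: levels can shift valuations, and nothing forces the distortion to be zero.) What analyticity does give --- and this is all the paper extracts, via \cite[Lemma 6.45]{Dixon} applied to $\psi\circ\phi^{-1}$ and $\phi\circ\psi^{-1}$ --- is \emph{bounded} distortion, in the form of a sandwich $S_{n+a}\le T_n\le S_{n-b}$.

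Second, and more fatally, your fallback claim that ``a bounded-distortion version would already make $d_\phi$ and $d_\psi$ bi-Lipschitz'' is wrong whenever the Hilbert function of $R$ is unbounded, i.e.\ whenever $\deg P\ge 1$ (for instance $R=\Z_p[[t]]$ or $R=\F_p[[t_1,t_2]]$). By your own formulas, a distortion $w'=w+k$ changes the metric by a factor of order $q^{\,d\sum_{i=w}^{w+k-1}H(i)}$, and since $H(i)\to\infty$ this factor is unbounded as $w\to\infty$, that is, as $y\to x$. So bounded valuation distortion does not yield bi-Lipschitz metrics, and \cite[Corollary 2.4]{Falconer} cannot be invoked; the two metrics genuinely fail to be bi-Lipschitz equivalent in general. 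This is precisely why the paper argues differently: it proves the sandwich above and then feeds it into the Barnea--Shalev $\liminf$ formula, showing the resulting error factor tends to $1$ because $\log_q|S_{n-b}:S_{n+a}|$ is a polynomial in $n$ of degree $\deg P$ while $\log_q|G:S_{n+a}|$ grows like a polynomial of degree $\deg P+1$. That asymptotic comparison of growth degrees is the essential idea missing from your plan; your route, granted the bounded-distortion input, can only be completed when $\deg P=0$ (e.g.\ $R=\Z_p$ or $\F_p[[t]]$), where bounded distortion does imply bi-Lipschitz.
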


\begin{proof}
Let us write $S_n$ and $T_n$ for $\mathfrak{m}^n_\phi S$ and $\mathfrak{m}_\psi^n T$, and $N(S)$ and $N(T)$ for the levels of $S$ and $T$.
We first show that there exist non-negative integers $a$ and $b$ such that 
\begin{equation}
\label{standard sandwich}
S_{n+a}\le T_n\le S_{n-b}
\end{equation}
for every $n\ge b$.
Since the charts $\phi$ and $\psi$ belong to the full atlas of $G$, the two functions $\psi\circ\phi^{-1}|_{\phi(S\cap T)}$ and $\phi\circ\psi^{-1}|_{\psi(S\cap T)}$ are analytic.
Since $\phi(S\cap T)$ is open in $\phi(S)$, it follows that $\psi\circ\phi^{-1}$ can be evaluated in $(\mathfrak{m}^{\ell})^{(d)}$ for some $\ell$.
By \cite[Lemma 6.45]{Dixon}, and observing that $\psi\circ\phi^{-1}(0)=0$, there exists a natural number $k$ such that
$$
\psi\circ\phi^{-1}((\mathfrak{m}^{n+k})^{(d)})\subseteq (\mathfrak{m}^n)^{(d)}
$$
for every $n\ge 0$.
This implies that $S_{n+a}\le T_n$ for $n\ge 0$, by choosing $a=\max\{k-N(S)+N(T),0\}$.
Arguing similarly with $\phi\circ\psi^{-1}$, we get (\ref{standard sandwich}).

Now, by \cite[Theorem 2.4]{YA}, we have
\begin{align*}
\hDim_{\{T_n\}}(H)
&=
\liminf_{n\rightarrow\infty} \, \frac{\log |H:H\cap T_n|}{\log |G:T_n|}
\\[5pt]
&\le
\liminf_{n\rightarrow\infty} \, \frac{\log |H:H\cap S_{n+a}|}{\log |G:S_{n+a}|-\log |T_n:S_{n+a}|}
\\[5pt]
&\le
\liminf_{n\rightarrow\infty} \, \frac{\log |H:H\cap S_{n+a}|}{\log |G:S_{n+a}|}\cdot \Bigg(1-\frac{\log |S_{n-b}:S_{n+a}|}{\log |G:S_{n+a}|} \Bigg)^{-1}
\\[5pt]
&=
\hDim_{\{S_n\}}(H).
\end{align*}
The last equality holds because, by (ii) of Lemma \ref{iso}, for $n$ large enough we have
$$
\log_q |S_{n-b}:S_{n+a}|=\sum_{i=N+n-b}^{N+n+a-1} \, P(i),
$$
which is polynomial in $n$ of degree $\deg P$; on the other hand, $\log_q |G:S_{n+a}|$ is asymptotically equivalent to
$$
\log_q |G:S| + \sum_{i=N}^{N+n+a-1} \, P(i) 
$$
and, by the Euler-MacLaurin formula, this sum is a polynomial in $n$ of degree $\deg P+1$.
By swapping $S$ and $T$, the result follows.
\end{proof}

\begin{definition}
Let $G$ be an $R$-analytic profinite group and let $H$ be a closed subgroup of $G$.
Then the \emph{standard Hausdorff dimension\/} of $H$, $\hDim_{\Std}(H)$, is the Hausdorff dimension of $H$ calculated with respect to the natural filtration induced by any given standard open subgroup of $G$.
\end{definition}

We need the following lemma before proving our main theorem.

\begin{lemma}
\label{R-modules}
Let $R$ be a pro-$p$ ring, and let $E$ be a vector subspace of dimension $e$ of $K^{(d)}$.
Then, for every $N\in\N$, the Hausdorff dimension of $(\mathfrak{m}^N)^{(d)}\cap E$ in $R^{(d)}$ with respect to the filtration $\{(\mathfrak{m}^n)^{(d)}\}_{n\in\N}$ is $e/d$.  
\end{lemma}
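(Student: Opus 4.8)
The plan is to apply the index formula for Hausdorff dimension with respect to a normal filtration and reduce everything to a counting estimate. Since $R^{(d)}$ is abelian, the filtration $\{(\mathfrak{m}^n)^{(d)}\}_{n\in\N}$ is normal, and $M:=(\mathfrak{m}^N)^{(d)}\cap E$ is a closed subgroup: writing $E$ as the solution set of homogeneous linear equations over $K$ and clearing denominators, $E\cap R^{(d)}$ is the kernel of a continuous $R$-linear map, so $M$ is an intersection of closed sets. Hence by \cite[Theorem 2.4]{YA},
\[
\hDim_{\{(\mathfrak{m}^n)^{(d)}\}}(M)=\liminf_{n\to\infty}\frac{\log|M:M\cap(\mathfrak{m}^n)^{(d)}|}{\log|R^{(d)}:(\mathfrak{m}^n)^{(d)}|}.
\]
For the denominator, $\log_q|R^{(d)}:(\mathfrak{m}^n)^{(d)}|=d\sum_{i=0}^{n-1}H(i)$, which by the Euler--MacLaurin formula is asymptotic to a polynomial in $n$ of degree $\deg P+1$. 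So the whole problem is to show that the numerator is $e\sum_{i=0}^{n-1}H(i)$ up to a lower-order error.

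Next I would put $E$ in graph form. Choosing a set of $e$ coordinates on which $E$ projects isomorphically and permuting coordinates (which preserves both $R^{(d)}$ and the filtration, hence the Hausdorff dimension), I may assume $E=\{(u,Lu)\mid u\in K^{(e)}\}$ for a $K$-linear map $L:K^{(e)}\to K^{(d-e)}$. Writing $\pi$ for the projection onto the first $e$ coordinates and $A_n:=\pi\big(E\cap(\mathfrak{m}^n)^{(d)}\big)$, the restriction of $\pi$ to $E$ is a bijection, so for $n\ge N$ one has $|M:M\cap(\mathfrak{m}^n)^{(d)}|=|A_N:A_n|$. Thus it suffices to control the $R$-submodules $A_n=\{u\in(\mathfrak{m}^n)^{(e)}\mid Lu\in(\mathfrak{m}^n)^{(d-e)}\}$ of $K^{(e)}$.

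The key step, and the only place where the hypothesis on $R$ enters, is to show that $L$ distorts the filtration by a bounded amount. Because $\mathrm{gr}(R)$ is a domain, the $\mathfrak{m}$-adic order $v(x)=\max\{n:x\in\mathfrak{m}^n\}$ satisfies $v(xy)=v(x)+v(y)$, so it extends to a valuation on $K$. Clearing denominators, I would choose $b_0\in R\smallsetminus\{0\}$ with $b_0L$ defined over $R$ and set $t=v(b_0)$. For $u\in(\mathfrak{m}^{m})^{(e)}$ every entry of $b_0\hspace{1pt}Lu$ lies in $\mathfrak{m}^{m}$; dividing by $b_0$ and using additivity of $v$ gives $Lu\in(\mathfrak{m}^{m-t})^{(d-e)}$. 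Consequently
\[
(\mathfrak{m}^{n+t})^{(e)}\subseteq A_n\subseteq(\mathfrak{m}^n)^{(e)}\qquad(n\ge 0),
\]
the right-hand inclusion being immediate by projecting onto the first $e$ coordinates.

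Finally I would convert these inclusions into index estimates. For $n\ge N+t$ the two chains above give, via the computation of \cite[Proposition 13.22]{Dixon}-type indices as in Lemma \ref{iso}(ii),
\[
q^{\,e\sum_{i=N+t}^{n-1}H(i)}\le|A_N:A_n|\le q^{\,e\sum_{i=N}^{n+t-1}H(i)},
\]
and both exponents equal $e\sum_{i=0}^{n-1}H(i)$ up to an error $O(n^{\deg P})$, since they differ from it only by a bounded number of terms $H(i)=P(i)$. Dividing by $d\sum_{i=0}^{n-1}H(i)\sim c\,n^{\deg P+1}$ and letting $n\to\infty$, the lower-order error washes out and the ratio converges to $e/d$; in particular the $\liminf$ equals $e/d$, as claimed. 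The genuine obstacle is precisely the uniform denominator bound of the third paragraph: without $\mathrm{gr}(R)$ being a domain the order function need not be additive, the passage from $b_0\hspace{1pt}Lu\in(\mathfrak{m}^m)^{(d-e)}$ to $Lu\in(\mathfrak{m}^{m-t})^{(d-e)}$ could fail for a fixed $t$, and the sandwich, hence the clean value $e/d$, would break down.
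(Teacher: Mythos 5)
Your reduction to the modules $A_n$ and your treatment of the denominator are fine, but the step you yourself single out as the key one is not merely unproven --- it is false. Additivity of $v$ (which does hold, since $\mathrm{gr}(R)$ is a domain) tells you that each entry of $Lu$ has $v$-value at least $m-t$; but for $z\in K$ the condition $v(z)\ge s$ does not imply $z\in\mathfrak{m}^s$ unless you already know $z\in R$, and nothing guarantees that. The set $\{z\in K : v(z)\ge s\}$ is in general much larger than $\mathfrak{m}^s$, because $R$ need not be the valuation ring of $v$. Concretely, take $R=\F_p[[x,y]]$ (so $\mathrm{gr}(R)=\F_p[x,y]$ is a domain), $d=2$, $e=1$, and $E=\{(u,(x/y)u) : u\in K\}=K\cdot(y,x)$. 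Here $b_0=y$, $t=1$, and $A_n=\{u\in\mathfrak{m}^n : (x/y)u\in\mathfrak{m}^n\}$. Since $y$ is prime in $R$ and $y\nmid x$, any $u\in A_n$ must be divisible by $y$, and in fact $A_n=y\,\mathfrak{m}^{n-1}$. Consequently, for \emph{every} fixed $t'$ the element $x^{n+t'}$ lies in $\mathfrak{m}^{n+t'}$ but not in $A_n$ (because $x^{n+t'+1}/y\notin R$), so the inclusion $(\mathfrak{m}^{n+t'})^{(e)}\subseteq A_n$ fails for every choice of shift. Both of your final index bounds rest on that inclusion (the lower bound needs $A_N\supseteq(\mathfrak{m}^{N+t})^{(e)}$, the upper bound needs $A_n\supseteq(\mathfrak{m}^{n+t})^{(e)}$), so the argument collapses there. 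Note that the lemma itself survives in this example: $A_n=y\,\mathfrak{m}^{n-1}\cong\mathfrak{m}^{n-1}$ has exactly the index growth of a rank-one module; it is the sandwich, not the statement, that is irreparably wrong.

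You should also know that your route is in substance the paper's own, and that the published proof has the same gap. The paper takes a basis in reduced echelon form (your graph normalization is the same thing), sets $W=\langle w_1,\ldots,w_e\rangle_R$, and claims $\mathfrak{m}^{n+k}\,W\subseteq(\mathfrak{m}^n)^{(d)}\cap E\subseteq\mathfrak{m}^n\,W$, where $k$ comes from the product of the denominators. The right-hand inclusion is correct (by the echelon shape, the coefficients of a vector of $E$ are among its coordinates), but the left-hand one fails for the same reason and on the same example: with $W=R\cdot(1,x/y)$ and $k=1$, the vector $x^{n+1}(1,x/y)$ lies in $\mathfrak{m}^{n+1}W$ but its second coordinate $x^{n+2}/y$ is not even in $R$. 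A correct proof needs a different containment tool. One repair: put $V=E\cap R^{(d)}$, a finitely generated $R$-module of rank $e$ (here Noetherianity of $R$ enters); then $\mathfrak{m}^n V\subseteq(\mathfrak{m}^n)^{(d)}\cap E$ trivially, while the Artin--Rees lemma provides a constant $c$ with $(\mathfrak{m}^n)^{(d)}\cap E=(\mathfrak{m}^n)^{(d)}\cap V\subseteq\mathfrak{m}^{n-c}V$ for $n\ge c$. This sandwich is true, and since $\log_q|V:\mathfrak{m}^n V|$ is eventually given by the Hilbert--Samuel polynomial of $V$, whose degree is $\deg P+1$ and whose leading coefficient is $e$ times that of the corresponding polynomial for $R$ (the multiplicity of a finitely generated torsion-free module over a Noetherian local domain is its rank times the multiplicity of the ring), the quotient $c_n$ tends to $e/d$ exactly as in your concluding paragraph.
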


\begin{proof}
Let $\mathcal{B}=\{w_1,\ldots,w_e\}$ be a basis of $E$, and let $A$ be the $e\times d$ matrix over $K$ whose rows are the vectors in $\mathcal{B}$.
We may assume that $A$ is in reduced echelon form.
Set $W=\langle w_1,\ldots,w_e \rangle_R$.
Let $k$ be such that the product of all denominators of the entries of $A$ belongs to $\mathfrak{m}^k$.
One readily checks that
\begin{equation}
\label{sandwich W}
\mathfrak{m}^{n+k} \, W \subseteq (\mathfrak{m}^n)^{(d)}\cap E\subseteq \mathfrak{m}^n \, W.
\end{equation}
The required Hausdorff dimension is the limit inferior of the sequence
\[
c_n
=
\frac{\log |(\mathfrak{m}^N)^{(d)}\cap E:(\mathfrak{m}^n)^{(d)}\cap E|}{\log |R^{(d)}:(\mathfrak{m}^n)^{(d)}|},
\]
where $n\ge N$.
Now, from (\ref{sandwich W}), for $n\ge N+k$, we have
\[
|\mathfrak{m}^{N+k} \, W:\mathfrak{m}^n \, W|
\le
|(\mathfrak{m}^N)^{(d)}\cap E:(\mathfrak{m}^n)^{(d)}\cap E|
\le
|\mathfrak{m}^N \, W:\mathfrak{m}^{n+k} \, W|.
\]
Since $W$ is a free $R$-module of rank $e$, we have
$\dim_{R/\mathfrak{m}} (\mathfrak{m}^n\, W/\mathfrak{m}^{n+1}\, W)=e H(n)$, and consequently
\[
\frac{e (H(N+k)+\cdots+H(n-1))}{d (H(1)+\cdots+H(n-1))}
\le c_n
\le
\frac{e (H(N)+\cdots+H(n+k-1))}{d (H(1)+\cdots+H(n-1))}.
\]
Now, since $H(n)=P(n)$ for large enough $n$, we have
\[
\lim_{n\to\infty} \frac{H(N+k)+\cdots+H(n-1)}{H(1)+\cdots+H(n-1)}
=
\lim_{n\to\infty} \frac{P(N+k)+\cdots+P(n-1)}{P(1)+\cdots+P(n-1)}
=
1,
\]
since both sums in the last limit are polynomials in $n$ of degree $\deg P+1$ and with the same leading coefficient.
Arguing similarly with the upper bound for $c_n$ given above, we conclude that $\lim_{n\to\infty} \, c_n=e/d$, as desired.
\end{proof}

We are now ready to prove our main theorem.

\begin{proof}[Proof of the Main Theorem]
Let $\dim (G)=d$ and $\dim (H)=e$.
Since $H$ is an $R$-analytic submanifold of $G$, there exist an open subset $U$ of $H$ containing $1$, and a chart $(V,\phi)$ of $G$ such that 
$U\subseteq V$ and $\phi(U)=E\cap \phi(V)$, for some vector subspace $E$ of $K^{(d)}$ of dimension $e$.

 From the proof of \cite[Theorem 13.20]{Dixon}, there exist a natural number $N$ and an open subgroup $S$ of $G$ such that $S\subseteq V$ and $(S,\phi)$ is a standard subgroup of $G$ of level $N$.
Since $U\cap S\subseteq_o U\subseteq_o H$, we have
\[
\hDim_{\Std}(H)=\hDim_{\{\mathfrak{m}^n_{\phi} S\}}(U\cap S),
\]
by Lemma \ref{leqo}.
Since $\phi$ is an isometry between $S$ and $((\mathfrak{m}^N)^{(d)},+)$ by Lemma \ref{iso}, we get
\[
\hDim_{\{\mathfrak{m}^n_{\phi} S\}}(U\cap S)=\hDim_{\{(\mathfrak{m}^n)^{(d)}\}}(\phi(U\cap S)).
\]
Now since $\phi (U\cap S)= E\cap (\mathfrak{m}^N)^{(d)}$, Lemma \ref{R-modules} yields that
\[
\hDim_{\{(\mathfrak{m}^n)^{(d)}\}}(\phi(U\cap S))=\frac{e}{d},
\]
which completes the proof.
\end{proof}

\begin{corollary}\label{c}
Let $G$ be an $R$-analytic group of dimension $d$.
Then
\[
\Spec_R(G) \subseteq \Big\{ 0,\frac{1}{d},\ldots,\frac{d-1}{d},1 \Big\}.
\]
In particular, the $R$-analytic spectrum of $G$ is finite and consists of rational numbers.
\end{corollary}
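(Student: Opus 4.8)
The plan is to deduce this corollary immediately from the Main Theorem, which has already done all of the analytic work. By the Main Theorem, every $R$-analytic subgroup $H$ of $G$ satisfies $\hDim_{\Std}(H)=\dim(H)/\dim(G)=\dim(H)/d$. Therefore the entire spectrum $\Spec_R(G)$ is controlled by the range of possible values of the single integer $\dim(H)$, and the task reduces to determining which integers can occur as the dimension of an $R$-analytic subgroup.

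The key step is to observe that $\dim(H)$ is a non-negative integer bounded above by $d$. This is where I would invoke the definition of $R$-analytic subgroup: such an $H$ is by hypothesis an $R$-analytic submanifold of $G$, and under Serre's local description a submanifold appears, in suitable charts, as a coordinate subspace of the ambient chart. Consequently its dimension is a well-defined non-negative integer that cannot exceed the dimension of the ambient manifold, so $\dim(H)\in\{0,1,\ldots,d\}$ with $0\le\dim(H)\le\dim(G)=d$.

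Combining these two facts, as $H$ ranges over all $R$-analytic subgroups of $G$ the value $\hDim_{\Std}(H)=\dim(H)/d$ ranges over a subset of $\Big\{0,\frac{1}{d},\ldots,\frac{d-1}{d},1\Big\}$, which gives the asserted containment. Since this right-hand set is a finite collection of rational numbers, any subset of it, in particular $\Spec_R(G)$, is finite and consists of rational numbers. The only point requiring any care is the inequality $\dim(H)\le d$; I do not expect a genuine obstacle here, since everything beyond the Main Theorem is purely formal and rests only on the elementary fact that a submanifold carries an integer dimension no larger than that of the ambient space.
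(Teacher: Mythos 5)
Your proposal is correct and matches the paper's (implicit) argument: the paper states the corollary without proof precisely because it follows immediately from the Main Theorem together with the observation that $\dim(H)$ is an integer in $\{0,1,\ldots,d\}$, which is exactly your reasoning. Your extra care about $\dim(H)\le d$ is sound, and indeed it is also visible in the paper's proof of the Main Theorem, where $\phi(U)=E\cap\phi(V)$ with $E$ a vector subspace of $K^{(d)}$, forcing $e=\dim(H)\le d$.
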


\noindent
\textit{Acknowledgement\/}.
We thank the anonymous referee for helpful comments.

\end{document}